\newtheorem{theorem}{Theorem}[section]
\newtheorem{lemma}[theorem]{Lemma}
\newtheorem{proposition}[theorem]{Proposition}
\newtheorem{question}[theorem]{Question}
\newtheorem{remark}[theorem]{Remark}
\newtheorem{claimmain}{Claim}[]
\theoremstyle{definition}
\newtheorem{definition}[theorem]{\bf Definition}
\newcommand{\bigset}[1]{\big\{ #1 \big\}}
\renewcommand{\leq}{\leqslant}
\renewcommand{\geq}{\geqslant}
\newcommand{\vph}{\varphi} 
\newcommand{\Rat}{{\mathds Q}}
\newcommand{\Real}{{\mathds R}}
\newcommand{\NID}{\ensuremath{\mathrm{NID}}}
\newcommand{\Piz}{\Pi^{-1}}
\newcommand{\Siz}{\Sigma^{-1}}
\newcommand{\defeq}{\mathrel{\mathop:}=}
\newcommand{\defhigh}[1]{{\em #1}}
\newcommand{\naturals}{\ensuremath{\mathbb N}}
\newcommand{\acro}[1]{{\ensuremath{\mathrm{ #1}}}}
\newcommand{\kolmpref}{\ensuremath{\acro{K}}}
\begin{document}

\title{Normalized Information Distance and the Oscillation Hierarchy}

\author[K. Ambos-Spies]{Klaus Ambos-Spies}
\address[Klaus Ambos-Spies]{Universit\"at Heidelberg\\
Institut f\"ur Informatik \\ 
Im Neuenheimer Feld 205\\ 
69120 Heidelberg}
\email{ambos@math.uni-heidelberg.de}

\author[W. Merkle]{Wolfgang Merkle}
\address[Wolfgang Merkle]{Universit{\"a}t Heidelberg\\
Institut f\"ur Informatik \\ 
Im Neuen\-hei\-mer Feld 205\\ 
69120 Heidelberg}
\email{merkle@math.uni-heidelberg.de}

\author[S. A. Terwijn]{Sebastiaan A. Terwijn}
\address[Sebastiaan A. Terwijn]{Radboud University Nijmegen\\
Department of Mathematics\\
P.O. Box 9010, 6500 GL Nijmegen, the Netherlands.} 
\email{terwijn@math.ru.nl}

\date{\today}

\begin{abstract}
We study the complexity of computing the normalized information distance. We introduce a hierarchy of limit-computable functions by considering the number of oscillations. This is a function version of the difference hierarchy for sets. We show that the normalized information distance is not in any level of this hierarchy, strengthening previous nonapproximability results. As an ingredient to the proof, we demonstrate a conditional undecidability result about the independence of pairs of random strings.
\end{abstract}

\keywords{Kolmogorov complexity, information distance, independence} 

\subjclass[2010]{
03D15, 
03D32, 
03D55, 
68Q30, 
}

\maketitle

\section{Introduction}\label{sec:introduction}
\subsubsection*{Normalized information distance}
The normalized information distance $\NID$ is a distance measure for binary strings that is based on prefix-free Kolmogorov complexity~\kolmpref. The \defhigh{normalized information distance} is defined as
\[
\NID(x,y) = \frac{E(x,y)}{\max\bigset{\kolmpref(x),\kolmpref(y)}}
\makebox[6em]{ where } 
E(x,y) = \max\bigset{\kolmpref(x|y),\kolmpref(y|x)}. 
\]
Note that $\NID$, being the ratio of two nonzero functions that are approximable from above, is computable in the limit, i.e., there is a computable rational-valued  function~$f$ 
with three arguments such that for all~$x$ and~$y$ we have  
\[
\lim_{s\rightarrow\infty} f(x,y,s) = \NID(x,y).
\]
Terwijn, Torenvliet, and~Vit\'anyi~\cite{TTV} have shown that~$\NID$ can neither be computably approximated from below nor from above, 
i.e., such a computable approximation~$f$ of~\NID\ can neither be increasing nor decreasing in~$s$. 
In particular, the function~\NID\ is not computable. In what follows, we improve on these nonapproximability results by confirming the conjecture~\cite{TTV} that for any computable approximation of~\NID, the number of oscillations is not bounded by a constant, or, equivalently, that~\NID\ is not in the oscillation hierarchy. The oscillation hierarchy is defined as the union of the classes~$\Siz_1, \Siz_2, \ldots$, where~$\Siz_k$  is the class of all functions that have a computable approximation that initially increases and switches at most~$k-1$ times between increasing and decreasing (see Section~\ref{sec:hierarchy} for formal definitions).

Related to the proof of our main result, we demonstrate that given two random strings, it is undecidable whether they are independent. In fact, this conditional undecidability result is derived in the  stronger form that there is no enumeration of pairs that includes infinitely many random pairs and where all the random pairs in the enumeration are independent. The stronger result can be viewed as a conditional immunity statement and is used in the proof of our main result.
\subsubsection*{Related work}
The concept of normalized information distance was introduced by Li et al.~\cite{Vitanyi1}, and subsequently studied in a series of papers, cf.\ Vit\'anyi~et al.~\cite{Vitanyi2} and Li and Vit\'anyi~\cite[Section 8.4]{LiVitanyi}. It has both theoretical and practical interest. While the function~\NID\ itself is noncomputable, there are computable variants that have a number of surprising practical applications. Such variants are for example defined in terms of standard compression algorithms in place of prefix-free Kolmogorov complexity.

The difference hierarchy over the computably enumerable sets, or c.e.\ sets, for short, was introduced by Ershov, cf.\ Odifreddi~\cite[IV.1.18]{Odifreddi} and Selivanov~\cite{Selivanov1995}.  It is a fine hierarchy  for the $\Delta^0_2$-sets, sometimes also referred to as the Boolean hierarchy. It can be seen as an effective version of a classical hierarchy introduced by Hausdorff, which is studied in descriptive set theory. An analogous hierarchy defined over NP is studied in complexity theory.  When restricting attention to $\{0,1\}$-valued functions, i.e., to {\em sets}, the oscillation hierarchy coincides with the difference hierarchy, as follows by the discussion following Definition~\ref{def:limit-computable}. In particular, $\Siz_1$ contains just the c.e.\ sets, $\Siz_2$ contains just the d.c.e.\ sets, i.e., the differences of c.e.\ sets, and in general $\Siz_k$ contains just  the $k$-c.e.\ sets.  These coincidences motivate the choice of our notation for the classes of the oscillation hierarchy, as the same notation has been used for the classes of the difference hierarchy, see e.g.\ Selivanov \cite{Selivanov2004}.

Recall that a hierarchy is proper if each of its levels is strictly included in the next one. Similar to the case of sets, the oscillation hierarchy is proper and does not exhaust the class of all limit-computable functions. As in the case of sets, this can be shown by elementary diagonalization arguments and, in fact, this {\em follows\/} from the analogous results for sets. Theorem~\ref{theorem:main}, our main result, asserts that $\NID$ is a natural example of a limit-computable function that is not in the oscillation hierarchy.

Note that Bennett et al.~\cite{Bennettetal} have shown that~$E$ satisfies the properties of a metric up to a constant additive term. Furthermore,~$E$ is minimal among all similar distance functions~\cite[Theorem 3.7]{Vitanyi2}. Note further that
somewhat in contrast to the definition of normalized information distance, the name information distance is used for the function~$D$ defined as
\[
D(x,y) = \min\bigset{l(p) \colon U(p,x)=y \wedge U(p,y)=x}. 
\]
Here~$U$ is the universal prefix-free machine used to define~\kolmpref. It can be shown that~$D$ and~$E$ are equal up to a logarithmic additive term~\cite[Corollary 3.1]{Vitanyi2}, i.e., we have
\[
D(x,y) = E(x,y) + O(\log E(x,y)). 
\]
\subsubsection*{Notation}
Our notation is mostly standard. For further explanations, details and background, in particular about computability theory, we refer to Odifreddi~\cite{Odifreddi} and to Downey and Hirschfeldt~\cite{DowneyHirschfeldt}.  A \defhigh{string} is a binary word, i.e., a finite sequence over the binary alphabet~$\{0,1\}$. We use \defhigh{$|x|$} to denote the \defhigh{length} of a string~$x$, the empty string~$\lambda$ is the unique string of length~$0$.  The set of strings is denoted by~$\{0,1\}^*$, the set of natural numbers is denoted by~$\omega$. The two latter sets are identified by the order isomorphism that takes the length-lexicographical ordering on the set of strings to the standard ordering on the natural numbers.

We use $\leq^+$  to denote inequality up to a fixed additive constant. For example, $f(x)\leq^+ g(x)$ means that there is a constant $c$ such that we have~$f(x) \leq g(x)+c$ for all~$x$ in some specific set that will be clear from the context. Similar notation such as~$=^+$ is defined likewise. 

Enumerations of any type of objects are always meant to be effective.
\subsubsection*{Prefix-free Kolmogorov complexity}
For further use, we compile some standard facts about Kolmogorov complexity. For proofs of these fact, as well as for definitions, details and further background, we refer to Li and Vit\'anyi~\cite{LiVitanyi} and to Downey and Hirschfeldt~\cite{DowneyHirschfeldt}. 
For a string $x$, we let $x^*$ denote the program of minimum length for $x$ that appears first in some fixed enumeration of the domain of the universal machine used to define~\kolmpref. By the latter condition, the string~$x^*$ can be computed given~$x$ and~$\kolmpref(x)$. For a string~$x$ of length~$n$, we have 
\begin{align} 
\kolmpref(x) &\leq^+ n + 2\log n, \label{eq:upperbound} \\
\kolmpref(x|\,|x|) &\leq^+ n \label{eq:lengthcode}.
\end{align}
Indeed, it holds that~$\kolmpref(x) \leq^+ n + \kolmpref(n)$ for all strings~$x$. By Chaitin's counting theorem~\cite[Theorem~3.7.6]{DowneyHirschfeldt}, 
there is a constant~$d$ such that for all~$t$ for at most a fraction of~$2^{-t+d}$ of all words~$x$ of length~$n$ we have~$\kolmpref(x) \leq^+ n + \kolmpref(n) -t$. In the special case where~$t$ is equal to~$\kolmpref(n)+1$, we obtain that at most a fraction of~$2^{-\kolmpref(n)-1+d}$ of all words~$x$ of length~$n$ is nonrandom in the sense that~$\kolmpref(x) < n$. By symmetry of information, we refer to the following chain of equations
\begin{align*} 
\kolmpref(x,y) &=^+ \kolmpref(x) + \kolmpref(y|x^*) \\
&=^+ \kolmpref(y) + \kolmpref(x|y^*),
\end{align*}
which holds for all strings~$x$ and~$y$. In case both strings have the same length, we have~$\kolmpref(xy) =^+ \kolmpref(x,y)$, and symmetry of information remains valid with~$\kolmpref(x,y)$ replaced by~$\kolmpref(xy)$. Symmetry of information is due to  Levin and G\'acs and also Chaitin~\cite[Theorem~3.10.2]{DowneyHirschfeldt}.

\subsubsection*{Outline}
The outline of the paper is as follows. First,  we review in Section~\ref{sec: limit-computable} 
limit-computable functions and introduce the oscillation hierarchy. 
Then, in Section~\ref{sec:basic-properties}, we derive some basic properties of~\NID\ and, in particular, 
reprove the known results that~\NID\ can neither be effectively approximated 
from below nor from above. Before demonstrating in Section~\ref{sec:theorem-main} 
our main result, Theorem~\ref{theorem:main}, we collect in Section~\ref{sec:some-lemmas} 
notation and facts to be used in its proof, including the already mentioned conditional immunity result, 
which is stated as Theorem~\ref{theorem:conditional-immunity}.
\section{Effective approximations and the oscillation hierarchy}\label{sec:hierarchy}
\subsubsection*{Limit-computable functions}\label{sec: limit-computable}
In this section we introduce notation that relates to approximations of real-valued functions on the natural numbers. This notation extends canonically to real-valued functions with a countable  domain like~$\{0,1\}^*$, $\Rat$, $\omega\times\omega$, or similar via the usual identification of such a domain with the set of natural numbers. In particular, this notation will be  applied to approximations of the function~\NID, which maps pairs of strings to a rational number. 

For a start, we recall the following notation from computability theory. 
\begin{definition}\label{def:limit-computable}
Let~$F\colon \omega \rightarrow \Real$ be a function. A function~$f \colon  \omega  \times\omega \rightarrow \Rat $ is an \defhigh{approximation} of~$F$, if we have for all natural numbers~$x$ that
\[
\lim_{s\rightarrow\infty} f(x,s) = F(x)
\]
The function $F$ is \defhigh{limit computable}, if~$F$ has a computable approximation. 
\end{definition}
Given a computable approximation of an $\omega$-valued function~$F$, by rounding the values of the approximation to the nearest natural number, we obtain a computable $\omega$-valued approximation~$f$ to~$F$ where then, in particular, for each argument~$x$ almost all values~$f(x,s)$ are equal to~$F(x)$.  As a consequence, $\omega$-valued limit-computable functions are just the limit-computable functions from computability theory, which are also called computably approximable or~$\Delta^0_2$-functions. By Shoenfield's Limit Lemma~\cite[IV.1.17]{Odifreddi}, such a function~$F$ is limit-computable if and only if $F$ is computable with the Halting Problem $\emptyset'$.
The three following remarks show that this equivalence is false for rational-valued functions in general but extends to rational-valued functions such as~\NID\ where for given arguments one can compute a finite set of rational numbers that contains the function value. 
\begin{remark}\label{rem:limit-lemma-false}
Let~$W_0, W_1, \ldots$ be the standard enumeration of all c.e.\ sets. Fix some enumeration~$(e_0, n_0), (e_1, n_1), \ldots$ of all pairs~$(e,n)$ such that~$n$ is in~$W_e$, and let~$W_{e,s}$ be equal to the set of all~$n_i$ such that~$e_i=e$ and~$i < s$. If we let~$F(e) = 1$ in case~$W_e$ is empty, let~$F(e)= 0$ in case~$W_e$ is infinite and, otherwise, let
\[
F(e)= 2^{-\max W_e },
\text{ then }
F(e) = \lim_{s \rightarrow \infty} f(e,s) 
\text{ where }
f(e,s) = 
2^{- \max(W_{e,s} \cup \{0\}) }.
\]
Note that~$F(e)$ is equal to~$0$ if and only~$W_e$ is infinite. The function~$f$ is computable, hence~$F$ is limit-computable. However, the function~$F$ is not computable with the halting problem because otherwise, the halting problem would decide the~$\Pi^0_2$-complete index set of all~$e$ such that~$W_e$ is infinite, a contradiction.
\end{remark}
\begin{remark}\label{rem:convergence-by-value-to-nid}
Let~$f$ be a computable approximation of~\NID, i.e., $f$ converges to~\NID\ in distance in the sense that for any arguments~$x$ and~$y$, the difference between~$f(x,y,s)$ and~$\NID(x,y)$ goes to zero. By definition of~\NID\ and the upper bounds~\eqref{eq:upperbound} and~\eqref{eq:lengthcode} on prefix-free Kolmogorov complexity, for some constant~$c$ any value of the form~$\NID(x,y)$ must be contained in the set
\[
D(x,y) = \big\{ \tfrac{i}{j} \colon i, j \le 2 (|x|+|y|) + c\big\}
\]
By rounding any value~$f(x,y,s)$ to the nearest value in~$D(x, y)$, breaking ties arbitrarily, we obtain an approximation~$f^{\mathrm{R}}$ that converges to~\NID\ not just in distance but also in value, i.e., for all~$x$ and~$y$ the approximated value~$f^{\mathrm{R}}(x,y,s)$ is equal to~$\NID(x,y)$ for almost all~$s$.
\end{remark}
\begin{remark}\label{rem:convergence-by-value}
A rational-valued function~$F$ is computable with the Halting Problem if and only if it has a computable approximation that converges to~$F$ by value in the sense of Remark~\ref{rem:convergence-by-value-to-nid}.  The proof is essentially the same as the proof of Shoenfield's Limit Lemma, details are omitted. 
\end{remark}
\subsubsection*{Increasing and decreasing phases}
Given an approximation~$f$ to some limit-comput\-able function and some argument~$x$, we consider maximum intervals of the natural numbers on which the function~$s \mapsto f(x,s)$ is increasing or is decreasing. By bounding the number of such intervals or phases from above by a constant for all arguments, we will obtain a fine hierarchy for limit-computable functions.
\begin{definition}\label{def:phases}
Let~$f$ be an approximation of some function~$\omega  \rightarrow \Real$ and fix some natural number~$x$.  With~$x$ understood, let \[
\delta(x,s) = f(x,s+1) - f(x,s)
\] 
be the \defhigh{increase of~$f$ at~$s$}, and call~$s$ \defhigh{increasing} in case~$\delta(x,s)>0$ and call~$s$ \defhigh{decreasing} in case~$\delta(x,s)<0$. 
Furthermore, a subset of the natural numbers is \defhigh{monotonic} in case it does not contain both, increasing and decreasing indices. 

For any given natural number~$x$, \defhigh{phase~$t$ of~$f$ on~$x$} is defined inductively for all~$t>0$ as follows. Phase~$1$ is equal to the maximum initial segment of~$\omega$ on which~$f$ is monotonic. In the induction step, assume that for some~$t>1$ the phases~$1$ through~$t-1$ are already defined. If the union of the latter phases is all of~$\omega$, these are the only phases of~$f$ on~$x$. Otherwise, let~$m_t$ be the maximum member in phase~$t-1$ and let phase~$t$ be equal to the maximum initial segment of~$\omega\setminus \{0,  \ldots, m_t\}$ on which~$f$ is monotonic. 

The approximation~$f$ \defhigh{reaches at most phase~$t$ on~$x$} 
in case there is no phase~$t+1$ on~$x$. In case the latter holds for all 
natural numbers~$x$, the approximation~$f$ \defhigh{reaches at most phase~$t$}. 
A phase is \defhigh{increasing} if it contains an increasing index, and a phase is \defhigh{decreasing} if it contains a decreasing index
\end{definition}
The next remark states without proofs some straightforward properties of phases.
\begin{remark}
Let~$f$ be an approximation of some function~$\omega  \rightarrow \Real$ and let~$x$ be a natural number. The phases of~$f$ on~$x$ form a partition of the natural numbers into successive contiguous intervals, which are all finite unless the partition is finite, in which case exactly the last phase is infinite. In case the function~$s \mapsto f(x,s)$ is constant, there is exactly one phase, which is neither increasing nor decreasing. Otherwise, each phase is either increasing or decreasing,  and increasing and decreasing phases alternate. With the possible exception of phase~$1$, a phase is increasing or decreasing if and only if the the least index in  the phase is increasing or decreasing, respectively. 
\end{remark}
\subsubsection*{The oscillation hierarchy}
The levels of the oscillation hierarchy introduced next stratify the class of limit-computable functions according to the number of alternations between increasing and decreasing phases.  
\begin{definition}
Let~$k$ be a nonzero natural number. A~$\Siz_k$-approximation is a computable approximation~$f$ of some function~$\naturals \rightarrow \Real$ such that on every input the first phase is increasing and~$f$  reaches at most phase~$k$. The definition of~$\Piz_k$-approximation is literally the same except that the first phase is required to be decreasing instead of increasing. 

A function~$F\colon \omega \rightarrow \Rat$ is a \defhigh{$\Siz_k$-function} in case it has a ~$\Siz_k$-approximation, the class of all $\Siz_k$-functions is denoted by~$\Siz_k$. The notion of a \defhigh{$\Piz_k$-function} and the class~$\Piz_k$ of all such functions is defined likewise. The {\em oscillation hierarchy} is defined as
\[
\bigcup_{k \ge1} \left (\Siz_k\cup\Piz_k\right )
= \bigcup_{k \ge 1} \Siz_k
= \bigcup_{k \ge 1} \Piz_k .
\]
The functions in~$\Siz_1$ and in~$\Piz_1$ are also called \defhigh{approximable from below} and \defhigh{approximable from above}, respectively.  
\end{definition}
\subsubsection*{Normalizing approximations of~\NID}
We write $\NID_s(x,y)$ for approximations of $\NID$, i.e., we have 
\[
\lim_{s\rightarrow\infty} \NID_s(x,y) = \NID(x,y)
\makebox[4em]{ and }
\NID_ s(x,y) \in \Rat .
\]
Notions relating to approximations are extended to this notation in the natural way, 
e.g., such an approximation is computable if $\NID_s(x,y)$ is a computable 
function of $s$, $x$, and~$y$. In the same fashion, let~$\kolmpref_s$ be some fixed computable 
approximation from above to~$\kolmpref_s$ with values in the natural numbers, 
and similar for conditional prefix-free Kolmogorov complexity.   
\begin{definition}\label{def:kolmogorov-approximation}
The \defhigh{Kolmogorov approximation~$\NID_s^{\kolmpref}$} to~\NID\ is defined by
\[
\NID_s^{\kolmpref}(x,y) 
 = \frac{\max\bigset{\kolmpref_s(x|y),\kolmpref_s(y|x)}}{\max\bigset{\kolmpref_s(x),\kolmpref_s(y)}} .
\]
\end{definition}
\begin{remark}\label{remark:normalized}
Let~$\NID_s$ be any effective approximation of~\NID\ that reaches at most phase~$m$ and, like in Remark~\ref{rem:convergence-by-value-to-nid}, let~$\NID_s^{\mathrm{R}}$ be the version of~$\NID_s$ where the function values have been rounded to the nearest value in the set~$D(x,y)$. Then for all~$x$ and~$y$ and for almost all~$i$, we have
\begin{equation}\label{eq:nid-r-k}
\NID_i^{\mathrm{R}}(x,y)
=\NID_i^{\kolmpref}(x,y) 
\end{equation}
because both sides of the equation converge in value to~$\NID(x,y)$  in the sense of Remark~\ref{rem:convergence-by-value-to-nid}. Let~$i_0$ be minimal such that~\eqref{eq:nid-r-k} holds with~$i$ replaced by~$i_0$, let
\[
\NID_s^{\prime}(x,y) = \NID_i^{\mathrm{R}}(x,y) \quad
\text{where~$i \le \max\{i_0,s\}$ is maximal such that~\eqref{eq:nid-r-k} holds,}
\]
and call~$\NID_s^{\prime}$ the \defhigh{normalized version} of~$\NID_s$. Note that ~$\NID_s^{\prime}$ is indeed an effective approximation of~$\NID$ and reaches at most phase~$m$, too. For a proof of the latter property, observe that~$\NID_s^{\mathrm{R}}$ reaches at most phase~$m$ since the latter function may only  
increase in~$s$ in case~$\NID_s$ increases, and a similar remark holds for decreasing. 
Thus it suffices to observe that by construction for all~$x$ and~$y$ there is a nondecreasing 
function~$g$ such that~$\NID_s^{\prime}(x,y)$ is equal to~$\NID_{g(s)}^{\mathrm{R}}$.  
\end{remark}

\section{Some basic properties of~$\NID$}\label{sec:basic-properties}
In Section~\ref{sec:theorem-main}, we will show our main result that~\NID\ is not in the oscillation hierarchy. Before, we derive in the current section some basic properties of~\NID\ and give new proofs for the known facts~\cite{TTV} that~$\NID$ is approximable from neither below nor above.
\begin{lemma} \label{lemma:limsup-nid}
The values of~$\NID(x,y)$ come arbitrarily close to~$0$ and~$1$ even if the arguments are restricted to strings~$x$ and~$y$ of the same length. In fact, the following slightly stronger assertions hold
\begin{alignat}{4}
\label{eq:nid-liminf}
\lim_{n \rightarrow \infty} \; &\max \{ \NID(x,x) &\colon |x|= n\} \;&=&\; 0,\\
\label{eq:nid-limsup}
\lim_{n \rightarrow \infty}\;  &\max \{ \NID(x,0^n) &\colon |x|=n\} \;&=& \;1.
\end{alignat}
\end{lemma}
\begin{proof}
For a proof of~\eqref{eq:nid-liminf}, observe that by definition we have~$\NID(x,x) = \frac{\kolmpref(x|x)}{\kolmpref(x)}$. For fractions of the latter form, with growing length of~$x$ the denominator goes to infinity, whereas the numerator is bounded from above by a constant, so~$\NID(x,x)$ tends to~$0$. For a proof of~\eqref{eq:nid-limsup}, observe that by a standard counting argument, for some constant~$c$, all sufficiently large~$n$ and some~$x$ of length~$n$, we have
\[
\kolmpref(0^n|x) \le  n -c \le  \kolmpref(x|0^n) 
 \makebox[5em]{ and } 
\kolmpref(0^n)\le   n  -  3 \log n\le  \kolmpref(x).
\]
So by definition of~$\NID$ and~\eqref{eq:upperbound}, it holds for almost all~$n$ and all such~$x$ that
\[
\NID(x,0^n) =  \frac{\kolmpref(x|0^n)}{\kolmpref(x)}\ge \frac{n-c}{n + 3 \log n}
\; \xrightarrow[n \rightarrow \infty]{} \; 1
\]
\end{proof}

Recall that a set is {\em immune\/} if it is infinite, but it 
does not contain an infinite c.e.\ subset. Immune sets were introduced 
by Post, and they play an important role in computability theory, 
cf.\ Odifreddi~\cite{Odifreddi}.
\begin{theorem} {\rm (B\={a}rzdi\c{n}\v{s})} \label{Barzdins}
The set~$\{x \colon \kolmpref(x) \geq \tfrac{1}{2}|x|\}$ is immune.
\end{theorem}
\begin{proof}
In case  the theorem were false, fix an enumeration of some infinite c.e.\ subset of the set under consideration.  Among all strings of length at least~$t$, let~$x_t$ be  the one that is enumerated first. There is a prefix machine with some coding constant~$c$ that outputs~$x_{4n}$ when given the string~$10^{n-1}$ as input, hence~$2n \le \kolmpref(x_{4n}) \le n + c$ for all~$n$, a contradiction.   
\end{proof}
\begin{proposition} \label{immune}
Let~$r$ be a real number where~$0<r<1$. Then the set 
$$
X_r = \bigset{(x,y) \colon |x|=|y| \text{ and } \NID(x,y)>r}
$$
is immune.
\end{proposition}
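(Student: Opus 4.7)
The plan is to prove immunity of $X$ by a Barzdins-style argument in the equal-length setting of Lemma~\ref{lemma} (pairs $(x,y)$ with $|x|=|y|$). Infinitude of $X$ follows directly from Lemma~\ref{lemma}(i): since $\limsup_n \NID(x,y)=1$ for $|x|=|y|=n$, there are infinitely many equal-length pairs with $\NID(x,y)>r$ whenever $r<1$.

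For immunity I would argue by contradiction: suppose $W\subseteq X$ is an infinite c.e.\ subset. Because only finitely many pairs have any given common length, the c.e.\ set $N = \{n \mid (\exists (x,y)\in W)\ |x|=|y|=n\}$ is infinite. For each $n\in N$, let $(x_n,y_n)$ be the first pair in a fixed enumeration of $W$ with $|x_n|=|y_n|=n$; the partial map $n\mapsto(x_n,y_n)$ is then computable on~$N$.

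The key observation is that from $y_n$ alone one reads off $n=|y_n|$, and from $n$ together with an index of $W$ one recomputes $(x_n,y_n)$ via the enumeration; symmetrically for $x_n$. Hence $K(x_n\mid y_n)$ and $K(y_n\mid x_n)$ are bounded by some constant $c$ independent of $n$, so $E(x_n,y_n)\leq c$. On the other hand, $(x_n,y_n)\in X$ forces $E(x_n,y_n) > r\cdot\max\{K(x_n),K(y_n)\}$, hence $\max\{K(x_n),K(y_n)\} < c/r$, an absolute constant. But only finitely many strings satisfy $K(s)\leq c'$ for any fixed constant $c'$, so the $x_n$ all live in a finite (hence length-bounded) set of strings, contradicting $|x_n|=n\to\infty$ as $n$ ranges over the infinite set~$N$.

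The hard part is identifying the right canonical pair: the equal-length restriction is essential, since it lets us recover $n$ from either component via its length, thereby forcing $E(x_n,y_n)=O(1)$ and bounding $\max\{K(x_n),K(y_n)\}$ by a constant. Without equal length this enumeration fails; for instance the c.e.\ set $\{(0,0^n)\mid n\geq 1\}$ has $\NID(0,0^n)\to 1$ and would otherwise yield a c.e.\ infinite subset of $X$, so the argument genuinely depends on the equal-length convention inherited from Lemma~\ref{lemma}.
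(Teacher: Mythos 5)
Your proof is correct and follows essentially the same strategy as the paper's: from an infinite c.e.\ subset of $X$ one extracts a computable sequence of pairs $(x_n,y_n)$ with $\NID(x_n,y_n)>r$, observes that $E(x_n,y_n)=O(1)$, and concludes that $\max\{K(x_n),K(y_n)\}$ is bounded by a constant, which is impossible for infinitely many distinct pairs.

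You are, however, more careful than the paper at the key step. The paper asserts that $E(x_n,y_n)$ is bounded ``since the sequence is computable,'' but this does not follow from computability alone: bounding $K(x_n|y_n)$ requires recovering $n$ from $y_n$ (and symmetrically for $K(y_n|x_n)$), which can fail if a coordinate repeats. Your example $\{(0,0^n)\mid n\geq 1\}$ is not merely a cautionary remark but a genuine issue: $\NID(0,0^n)\to 1$, since both $E(0,0^n)$ and $\max\{K(0),K(0^n)\}$ equal $K(n)+O(1)$, so for any $r<1$ this computable set is eventually contained in $X$. Hence Proposition~\ref{immune} as literally stated does not hold for pairs of arbitrary lengths, and the ``WLOG computable sequence'' step in the paper's proof conceals a missing hypothesis. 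Your restriction to equal-length pairs, one per length, is exactly the right fix: $n$ is recoverable as $|x_n|=|y_n|$, so both conditional complexities are $O(1)$ as claimed. This equal-length version is all the paper ever uses (every invocation of Proposition~\ref{immune} concerns pairs of the same length, which can always be arranged via Lemma~\ref{lemma}), so nothing downstream is affected. In short, your proof not only is correct but also patches a small gap in the paper's own argument.
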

\begin{proof}
First note that $X$ is infinite by Lemma~\ref{lemma:limsup-nid}.
Now suppose for a contradiction that~$X_r$ has an infinite c.e.\ subset~$A$. For each~$n$ there are at most finitely many pairs~$(x,y)$ where~$|x|=|y|=n$, hence by taking an appropriate effective subsequence of some fixed enumeration of~$A$, 
we obtain an enumeration~$(x_0, y_0), (x_1, y_1), \ldots$ of some infinite c.e.\ subset of~$A$ where~$|x_n|< |x_{n+1}|$ for all~$n$. By the latter property and because~$x_n$ and~$y_n$ have equal length by definition of~$X_r$, the values~$\max\{\kolmpref(x_n),\kolmpref(y_n)\}$ tend to infinity, while the values~$\kolmpref(x_n|y_n)$ and~$\kolmpref(y_n|x_n)$ are both bounded from above by a fixed constant that does not depend on $n$. Consequently, the values~$\NID(x_n,y_n)$ tend to~$0$, a contradiction.
\end{proof}
\begin{theorem} {\rm (\cite{TTV})} \label{Sigma1}
$\NID$ is not approximable from below.
\end{theorem}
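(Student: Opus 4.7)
The plan is to derive a contradiction from Proposition~\ref{immune} by showing that a $\Siz_1$-approximation of $\NID$ would make the set $X = \{(x,y) : \NID(x,y) > r\}$ computably enumerable, which contradicts its immunity for any fixed $0<r<1$.

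More concretely, suppose for contradiction that $\NID \in \Siz_1$, witnessed by a computable nondecreasing approximation $f(x,y,s)$ with $\lim_s f(x,y,s) = \NID(x,y)$. Fix any rational $r$ with $0 < r < 1$. I would then show that $X$ is c.e.\ by the enumeration procedure: enumerate $(x,y)$ into $X$ as soon as some stage $s$ is found with $f(x,y,s) > r$. The two directions of correctness are immediate: if $\NID(x,y) > r$, then since $f(x,y,s)$ converges to $\NID(x,y)$ it eventually exceeds $r$; conversely, if $f(x,y,s) > r$ at some stage, then by monotonicity $\NID(x,y) = \lim_t f(x,y,t) \geq f(x,y,s) > r$, so $(x,y) \in X$.

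Since $X$ is infinite by Lemma~\ref{lemma}(i), the c.e.\ set $X$ would be an infinite c.e.\ subset of itself, contradicting the immunity of $X$ established in Proposition~\ref{immune}. Hence $\NID \notin \Siz_1$.

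There is really no hard step here: once Proposition~\ref{immune} is in hand, the argument is just the standard observation that the strict superlevel sets of a function approximable from below are c.e. The only point that needs a moment's care is the use of monotonicity of $f$ to pass from ``some approximation exceeds $r$'' to ``the limit exceeds $r$,'' which is exactly why a $\Siz_1$-approximation is needed rather than an arbitrary limit-computable one.
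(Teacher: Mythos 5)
Your proof is correct and is essentially the paper's own argument: a $\Siz_1$-approximation makes the superlevel set $\{(x,y) : \NID(x,y) > r\}$ c.e.\ (the paper uses $r=\tfrac{1}{2}$), which is infinite by Lemma~\ref{lemma}(i) and therefore contradicts the immunity established in Proposition~\ref{immune}. The paper leaves the monotonicity observation implicit, but you spell it out; otherwise the two proofs coincide.
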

\begin{proof}
By Proposition~\ref{immune}, the set~$X_{1/3}$ defined there is immune. But if~$\NID$ were approximable from below, this set would be~c.e., hence could not be immune.
\end{proof}
\begin{lemma} \label{lemma:noseq}
There is no computable sequence of pairs $(x_k,y_k)$ such that 
$|x_k| = |y_k|$ and $\NID(x_k,y_k) < \tfrac{1}{k}$ for all~$k$. 
\end{lemma} 
\begin{proof}
Assume for a contradiction that there is a sequence as in the lemma. Fix a constant~$c_0$ such that for all strings~$x$ and~$y$ of equal length, the values~$\kolmpref(x)$ and~$\kolmpref(y)$ are both less than or equal to~$\kolmpref(xy)+c_0$. There is a prefix-free machine with some coding constant~$c_1$ that outputs~$x_{2k}y_{2k}$ when given the binary string~$10^{k-1}$ as input, hence ~$\kolmpref(x_{2k}y_{2k}) \le k + c_1$ for all~$k$. In summary,  we have
\[
\frac{1}{2 k} > \NID(x_{2k}, y_{2k}) \ge 
\frac{1}{\max\{\kolmpref(x_{2k}), \kolmpref(y_{2k})\}}   
\ge \frac{1}{\kolmpref(x_{2k}y_{2k})+c_0}
\ge \frac{1}{k+c_0 + c_1},
\]
a plain contradiction for all~$k > c_0 + c_1$.
\end{proof}
\begin{proposition} {\rm (\cite{TTV})} \label{Pi1}
$\NID$ is not approximable from above.
\end{proposition}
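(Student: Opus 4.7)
The plan is a Barzdin-style immunity argument, dualizing the proof of Theorem~\ref{Sigma1}. Whereas the $\Siz_1$ case extracted an infinite c.e.\ subset of the immune set $\{(x,y):\NID(x,y)>r\}$, here I would use the hypothetical $\Piz_1$-approximation of $\NID$ to extract an infinite c.e.\ subset of a set of random strings, contradicting Barzdin's Theorem~\ref{Barzdins}. The key observation concerns the diagonal pair: $\NID(x,x)=K(x|x)/K(x)$, and since $K(x|x)$ is bounded above by a universal constant and (in the standard prefix-free setup) bounded below by $1$ for all but finitely many $x$, the value $\NID(x,x)$ is essentially a multiplicative reciprocal of $K(x)$.

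Assume for contradiction that $\NID_s(x,y)\darrow\NID(x,y)$ is a nonincreasing computable approximation, and let $c_1$ be an upper bound for $K(x|x)$, so that
$$
1/K(x) \leq \NID(x,x) \leq c_1/K(x)
$$
holds for all but finitely many $x$. Consider the set
$$
T=\bigset{x : \NID(x,x)<3c_1/|x|},
$$
which is c.e.\ under the $\Piz_1$ assumption: enumerate $x$ into $T$ as soon as $\NID_s(x,x)<3c_1/|x|$ at some stage~$s$.

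The contradiction then follows from two simple estimates. On the one hand, every $x$ with $K(x)>|x|/3$ satisfies $\NID(x,x) \leq c_1/K(x) < 3c_1/|x|$, so $T$ contains almost every such $x$ and is in particular infinite. On the other hand, $x\in T$ forces $1/K(x) \leq \NID(x,x) < 3c_1/|x|$, hence $K(x)>|x|/(3c_1)$, so $T$ is contained in $\{x:K(x)\geq|x|/(3c_1)\}$ up to finitely many exceptions. The proof of Theorem~\ref{Barzdins} applies verbatim with this positive constant in place of $\tfrac{1}{2}$ to show that this latter set is immune, giving the desired contradiction.

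The main technical point I expect to handle is the side condition $K(x|x)\geq 1$ underlying the lower estimate on $\NID(x,x)$. In the standard universal prefix machine this is automatic for all but finitely many $x$, but it can also be bypassed by replacing $(x,x)$ with a pair such as $(x,\cmp{x})$ for which both conditional complexities are bounded below by a positive constant; the argument then carries over unchanged. Bookkeeping aside, the proof is a direct analogue of Theorem~\ref{Sigma1} in which the witnesses provided by Lemma~\ref{lemma}(ii) (pairs with small $\NID$, in particular diagonal pairs for random strings) replace those of Lemma~\ref{lemma}(i).
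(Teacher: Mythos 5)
Your proof is correct and, while it uses the same raw ingredients as the paper (diagonal pairs $(x,x)$, the constant bound $c_1$ on $E(x,x)$, and Barzdins-style immunity of high-complexity strings), the mechanism is genuinely different and notably cleaner. The paper's proof tries to use the $\Piz_1$-approximation to actually \emph{compute} $K(x)$ on an infinite computable set: it picks a precision $\eps$, selects a minimal $b\in\{1,\ldots,c\}$ for which the approximation is infinitely often near $b/K_s(x)$, and then argues (somewhat tersely, via monotonicity and minimality) that at those moments $K_s(x)$ has stabilized. Your version sidesteps all of that. You only need to \emph{detect} that $K(x)$ is large, not to compute it, and the threshold set $T=\{x:\NID(x,x)<3c_1/|x|\}$ does exactly this: it is c.e.\ under the nonincreasing approximation, it is infinite because it contains every $x$ with $K(x)>|x|/3$, and (modulo the $E(x,x)\geq 1$ bookkeeping, which you correctly flag and which the paper leaves implicit) it is contained in $\{x:K(x)>|x|/(3c_1)\}$, so immunity applies. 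The only thing to double-check is that the generalization of Theorem~\ref{Barzdins} to the threshold $|x|/(3c_1)$ is available, which it is: the standard compression argument in the proof the paper cites works for any fixed positive fraction of $|x|$. So your route buys a shorter, self-contained argument that avoids the precision/minimal-$b$ step, at the cost of invoking the parametrized form of Barzdins' theorem rather than the literal $\tfrac{1}{2}$-version stated in the paper.
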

\begin{proof}
Assume for a proof by contradiction that the proposition is false. By Lemma~\ref{lemma:limsup-nid}, the values~$\NID(x,x)$ tend to~$0$, thus by dovetailing approximations from above to the values~$\NID(x,x)$ for all~$x$, for 
given~$k$ one can effectively find a string~$x_k$ such that~$\NID(x_k,x_k) < \frac{1}{k}$.
This contradicts Lemma~\ref{lemma:noseq}.
\end{proof}
\begin{theorem}
$\NID$ is not a $\Siz_2$-function.
\end{theorem}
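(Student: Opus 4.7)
The plan is to assume $\NID\in\Siz_2$ via a computable approximation $f$, so for each pair $(x,y)$ the sequence $f((x,y),t)$ is first nondecreasing (up to some unknown switch stage $s_{(x,y)}$) and then nonincreasing, converging to $\NID(x,y)$. I aim to derive a contradiction with either Lemma~\ref{noseq} or the immunity statement in Proposition~\ref{immune}. The key observation is that a strict decrease $f((x,y),t)<f((x,y),t-1)$ can only happen past the switch; so once such a decrease is witnessed at some stage $\tau$, the approximation is nonincreasing for all $t\geq\tau$, and therefore every later value of $f((x,y),t)$ is a genuine upper bound on $\NID(x,y)$. Let $T$ denote the c.e.\ set of equal-length pairs $(x,y)$ for which a strict decrease of $f((x,y),\cdot)$ is ever witnessed.

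I then split into three cases. If $T$ is finite, then for cofinitely many equal-length pairs the approximation $f((x,y),\cdot)$ is monotone nondecreasing, so the c.e.\ set $S=\{(x,y):|x|=|y|,\,\exists t\ f((x,y),t)>1/2\}$ satisfies $S\setminus T\subseteq\{(x,y):\NID(x,y)>1/2\}$; since $S\setminus T$ is infinite (by Lemma~\ref{lemma}(i) combined with the finiteness of $T$) and c.e., this yields an infinite c.e.\ subset of $\{\NID>1/2\}$, contradicting Proposition~\ref{immune}. If $T$ is infinite and for every $n$ the set $T\cap\{(x,y):\NID(x,y)<1/n\}$ is infinite, then a dovetailed search over pairs and stages effectively produces, for each $n$, an equal-length pair $(x,y)$ together with a stage $t$ past a witnessed strict decrease satisfying $f((x,y),t)<1/n$; since $\NID(x,y)\leq f((x,y),t)$ after the witnessed decrease, this furnishes a computable sequence violating Lemma~\ref{noseq}. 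Otherwise $T$ is infinite but $T\cap\{(x,y):\NID(x,y)<1/n_0\}$ is finite for some $n_0$; removing this finite exceptional set from $T$ then exhibits an infinite c.e.\ subset of $\{(x,y):\NID(x,y)\geq 1/n_0\}$, which is itself immune by the same argument as in Proposition~\ref{immune}.

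The delicate point I expect is the finite-$T$ case: one must check that monotonicity of $f$ outside $T$ genuinely converts the event ``$f((x,y),t)>1/2$ at some stage'' into ``$\NID(x,y)>1/2$'', and that subtracting the finite set $T$ keeps the resulting set c.e.\ and inside $\{\NID>1/2\}$. The overall device---using a strict decrease in $f$ as an effective marker that we have entered the $\Piz_1$-phase of the $\Siz_2$ approximation---is set up so that it can be iterated at higher levels of the oscillation hierarchy treated later in the paper.
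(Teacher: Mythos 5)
Your proposal is correct and follows essentially the same strategy as the paper: treat a witnessed strict decrease in the $\Siz_2$-approximation as an effective certificate that the nonincreasing phase has begun (so subsequent values are genuine upper bounds on $\NID$), and derive a contradiction from Lemma~\ref{noseq} together with the immunity of Proposition~\ref{immune}. Your explicit three-way case split on the c.e.\ set $T$ is a more careful packaging of what the paper argues informally (the paper first rules out the degenerate cases to force itself into what you call Case~B, then thins out to a sequence violating Lemma~\ref{noseq}), but the underlying ideas and the invoked lemmas coincide.
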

\begin{proof}
Suppose for a contradiction that $\NID$ is $\Siz_2$, that is, it has a computable approximation that starts with an increasing phase and reaches at most phase~2. 
Consider the pairs~$(x,y)$ of words where~$|x|=|y|$. By Lemma~\ref{lemma:limsup-nid}, there are infinitely many such pairs $(x,y)$ where~$\NID(x,y)>\tfrac{3}{4}$. Consequently, we can effectively find infinitely many such pairs $(x,y)$  such that the approximation of~$\NID(x,y)$ attains a value strictly larger than~$\tfrac{3}{4}$ during phase~1. If for some~$k\ge 2$ and almost all pairs~$(x,y)$ of the latter kind it would actually hold that $\NID(x,y)>\tfrac{1}{k}$ this would contradict Proposition~\ref{immune}. 
As a consequence, for every~$k\ge 2$ there is a pair~$(x,y)$ of words of identical length where the approximation becomes smaller than~$\tfrac{1}{k}$ during phase~2, and for all such~$k$, $x$, and~$y$ we have~$\NID(x,y) < \tfrac{1}{k}$ because the approximation never reaches phase~$3$. For given~$k$ such~$x$ and~$y$ can be found effectively, which contradicts Lemma~\ref{lemma:noseq} 
\end{proof}
\section{Conditional independence} \label{sec:some-lemmas}
\subsubsection*{Random and independent pairs}
Before we demonstrate in the next section our main result, Theorem~\ref{theorem:main}, we collect some notation and facts used in its proof.
\begin{definition}\label{def:random-pairs}
Let~$r>0$ be a real number and let~$a$ and~$a'$ be words. 
The word~$a$ is \defhigh{random} if~$\mathrm{K}(a) \ge |a|$, and the pair~$(a, a')$ is \defhigh{random} if~$a$ and~$a'$ are both random. 

The string~$a$ is \defhigh{$r$-compressible} if~$\mathrm{K}(a) \le r |a|$, and the pair~$(a, a')$ is \defhigh{$r$-compressible} if~$a$ and~$a'$ are both $r$-compressible. The pair~$(a, a')$ is \defhigh{mutually $r$-compressible} if we have 
\begin{equation}\label{eq:def-mutual-incompressible}
\mathrm{K}(a|a') \le r |a|  \makebox[ 4em ]{ and }
\mathrm{K}(a'|a) \le r |a'| .
\end{equation}
\end{definition}
\begin{lemma}\label{lemma:number-of-random-pairs}
Let~$\varepsilon >0$ be a real number. For almost all~$n$, all but a fraction of at most~$\varepsilon$ of the pairs~$(a, a')$ of words of equal length~$n$ are random \end{lemma}
\begin{proof}
By Chaitin's counting theorem, there is a constant~$d$ such that for given~$n$, at most~$2^{n-\kolmpref(n)-1+d}$ many pairs have a nonrandom first component, and the same bound holds for the number of pairs with nonrandom second component. Consequently, among the~$2^{2n}$ pairs of words of length~$n$ at most~$2^{n-\kolmpref(n)+d}$ are nonrandom, which is a fraction of at most~$\varepsilon$ for almost all~$n$.
\end{proof}
Recall that an order is a function with values in the natural numbers that is nondecreasing and unbounded
\begin{definition}\label{def:dependent-pairs}
With some computable order~$h$ understood, the pair~$(a, a')$ is \defhigh{independent conditioned on a string~$x$} if we have
\begin{equation}\label{ref:def-independent-pair}
\kolmpref(a| {a'}^\ast, {x}^\ast) \ge |a| - h(n) \makebox[ 4em ]{ and }
\kolmpref(a'|a^{\ast},  {x}^\ast) \ge |a'| - h(n),
\end{equation} 
and~$(a, a')$ is \defhigh{independent} if it is independent conditioned on the empty string.
\end{definition}
\begin{lemma}\label{lemma:number-of-independent-pairs}
Let~$\varepsilon >0$ be a real number and let~$h$ be some order. Then there is some~$n_0$ such that for all~$n \ge n_0$ and for any fixed word~$x$, all but a fraction of at most~$\varepsilon$ of the pairs~$(a, a')$ of words of equal length~$n$ are independent conditioned on~$x$.  
\end{lemma}
\begin{proof}
Fix any natural number~$n$ and any word~$x$. The number of words of length strictly less than~$n-h(n)$ is bounded from above by~$2^{n-h(n)}$, hence for given~$a^{\prime}$ the latter bounds also the number of words~$a$ such that~$\kolmpref(a| {a'}^\ast, {x}^\ast) < n - h(n)$. As a consequence, the number of pairs~$(a,a')$ of words of length~$n$ that do not satisfy the first inequality in~\eqref{ref:def-independent-pair} is at most~$2^n 2^{n-h(n)}$. By symmetry, the same upper bound holds for the number of pairs~$(a,a')$ that do not satisfy the second inequality in~\eqref{ref:def-independent-pair}. Consequently,  among the~$2^{2n}$ pairs of words of length~$n$, at most~$2 \cdot 2^{2n-h(n)}$ many pairs are not independent conditioned on~$x$, i.e., at most a fraction of~$2 \cdot 2^{-h(n)}$.  The latter bound is at most~$\varepsilon$ for all~$n$ larger than some appropriate number~$n_0$ that does not depend on~$x$. 
\end{proof}
\subsubsection*{Conditional immunity}
As a further ingredient to the proof of Theorem~\ref{theorem:main}, we derive a result about the undecidability of independence of random strings. More precisely, we show that there is no algorithm that, given two random strings of the same length, can decide whether they are independent or not, where it is agreed that the algorithm may fail to converge or to give the right answer if one or both of the strings are not random. In fact, we need a stronger fact, which will be formulated in terms of the following notion of conditional immunity. 

\begin{definition}
A set $A$ is {\em decidable conditional to\/} a set~$C$ if there is a partial computable function $\vph$ such that for all~$x$ in~$C$ the value~$\vph(x)$ is defined and equal to~$A(x)$.

A set~$A$ is {\em immune conditional to\/} a set~$C$ if there is no c.e.\ set $B$ such that~$B\cap C$ is an infinite subset of~$A$. 
\end{definition}

Decidability and immunity conditional to the set of natural numbers are just classical decidability and immunity, respectively. Classically, a set is not immune if it has an infinite~c.e.\ subset, where one can always assume that this c.e.\ subset is indeed decidable, since every infinite c.e.\ set contains a decidable subset. This assumption is false in general for the conditional variant of immunity, since all decidable subsets of the considered c.e.\ set may have a finite intersection with the conditional set~$C$.

Note that if $A\cap C$ is infinite, and $A$ is decidable conditional to $C$, then $A$ cannot be immune conditional to $C$. Hence, conditional immunity is a strong form of conditional undecidability. 

\begin{theorem} \label{theorem:conditional-immunity}
Let~$r>0$ be a real number. Let~$R$ be the set of random pairs of equal length and let~$I$ be the set of pairs of equal length that are not mutually r-compressible, i.e., let
\begin{align*}
R &= \bigset{(x,y) \colon  |x|=|y| \wedge \kolmpref(x)\geq |x| \wedge \kolmpref(y)\geq |y|}, \\
I &= \bigset{(x,y) \colon  |x|=|y| \wedge \big(\kolmpref(x|y)> r|x| \vee \kolmpref(y|x) > r|y|\big)}.
\end{align*}
Then the set~$I$ is immune conditional to $R$.
\end{theorem}
\begin{proof} 
Suppose for a contradiction that there exists a c.e.\ set $B$ such that~$R\cap B$ is an infinite subset of~$I$. Fix any pair~$(x,y)$ in~$R \cap B$ and w.l.o.g.\  assume~$\kolmpref(y|x)\geq r|y|$. If we let~$n$ be equal to the length of~$x$ and~$y$, we have for some constant~$c$ 
\begin{align*}
\kolmpref(xy) &\geq^+ \kolmpref(x) + \kolmpref(y|x^*) \\
&\geq^+ \kolmpref(x) + \kolmpref(y|x,\kolmpref(x)) \\
&\geq^+ n + \kolmpref(y|x) - c \log n \\
&\geq\phantom{^+}  n + rn - c \log n.
\end{align*}
Here the inequalities follow, from top to bottom, by the variant of symmetry of information stated in the paragraph on Kolmogorov complexity, because~$x^*$ can be computed given~$x$ and~$\kolmpref(x)$, because applying~\eqref{eq:upperbound} twice yields~$\kolmpref(\kolmpref(x)) < c \log n$ for some constant~$c$, and, finally, by assumption on the pair~$(x,y)$. 

Consider any~$n$ that is so large that~$c \log n < \tfrac{r}{2}n$ and where~$R \cap B$ contains pairs~$(x,y)$ of words of length~$n$. Then, on the one hand, for each such pair, we have~$\kolmpref(xy) \ge^+ n + \tfrac{r}{2}n$. On the other hand, for each such~$n$ there is such a pair~$(x_n, y_n)$ where~$\kolmpref(x_n y_n)\leq^+ n$,  a contradiction. In order to obtain~$(x_n, y_n)$ as claimed, let~$z_n$ be the string of length~$n$ that is enumerated last in some fixed enumeration of all nonrandom strings (of all lengths).  Then knowing~$z_n$ one knows all random strings of length~$n$. Thus we can compute from $z_n$ the pair~$(x_n,y_n)$ that among all random pairs of strings of length~$n$ is enumerated first into~$B$. Since~$\kolmpref(z_n)< n$, we have $\kolmpref(x_ny_n)\leq^+ n$. \end{proof}
\section{$\NID$ is not in the oscillation hierarchy}\label{sec:theorem-main}
Our main result Theorem~\ref{theorem:main} asserts that~$\NID$ is not in the oscillation hierarchy, which confirms a conjecture by Terwijn, Torenvliet, and~Vit\'anyi~\cite{TTV}. 

We begin by giving an informal description of the proof of Theorem~\ref{theorem:main}. 
For a proof by contradiction, we assume that there is a computable approximation~$\NID_s$ to~\NID\ that reaches at most phase~$m$ for some natural number~$m$.  By Remark~\ref{remark:normalized},
we can assume that this approximation~$\NID_s$ is normalized, i.e., is obtained by approximating prefix-free Kolmogorov complexity. We may thus argue, for example, that the approximated values~$\NID_s(x,y)$ become larger in case the approximations to~$\kolmpref(x)$ and~$\kolmpref(y)$ become smaller while the approximations to~$\kolmpref(y|x)$ and~$\kolmpref(y|x)$ remain the same. By using such formulations we aim at a very rough intuitive description of the phenomena that occur, which is, however, not precise enough to provide a sketch of the formal proof. 

In the proof of Theorem~\ref{theorem:main}, we fix rational numbers~$\alpha$ and~$\beta$ where~$\beta < \alpha <1$. The proof has an inductive structure where in the induction step we consider approximations~$\NID_s(w,w^{\prime})$ for pairs of strings~$w=abc$ and~$w^{\prime}=a^{\prime}b^{\prime}c^{\prime}$ where~$a$ and~$a^{\prime}$, $b$ and~$b^{\prime}$, as well as~$c$ and~$c^{\prime}$ are of identical length, and where~$a$ has length~$n$, $b$ has length~$2n$, and~$c$ has length~$\ell n$ for some fixed~$\ell$ where~$6 \le \ell \le 3^m-3$. 
\begin{figure}[htb]
\mbox{} \hfill
\xymatrix@R=10pt@C=10pt@C=20pt@M=0pt{
\ar^*+{a}@{|-|}[r] &  
\ar^*+{b}@{-|}[rr] && 
\ar^*+{c}@{-|}[rrrrrr] &&&&&& \hspace*{.5cm} w \\
\ar^*+{a^{\prime}}@{|-|}[r] &  
\ar^*+{b^{\prime}}@{-|}[rr] &&            
\ar^*+{c^{\prime}}@{-|}[rrrrrr] &&&&&& \hspace*{.5cm} w^{\prime} 
}
\hfill \mbox{}
\end{figure}
We use an independence condition for pairs of words where the fraction of pairs  that do not satisfy the condition among all pairs of word of length~$\ell n$ tends to zero when~$n$ goes to infinity. Thus if some property holds for almost all~$n$ and a constant nonzero fraction of all pairs~$(c,c^{\prime})$ of words of length~$\ell n$, then for for almost all~$n$ and some slightly smaller constant nonzero fraction of all such pairs, both the property and the independence condition hold.   

In the induction step, we assume that there is an increasing phase~$t_0$ during which the approximation goes above~$\alpha$ for infinitely many pairs~$(a,a^{\prime})$ and some constant nonzero fraction of all pairs~$(bc,b^{\prime}c^{\prime})$. Then we argue that this includes infinitely many pairs~$(ab,a^{\prime}b^{\prime})$ such that at some later stage the pair~$(b,b^{\prime})$ appears to be at the same time random and mutually highly compressible. By the latter property and the independence condition it follows that~$\NID(abc,a^{\prime}b^{\prime}c^{\prime})< \beta$, which in turn implies that there must be a decreasing phase~$t_1> t_0$ during which the approximation goes  below~$\beta$ for infinitely many pairs~$(ab,a^{\prime}b^{\prime})$ and some constant nonzero fraction of all pairs~$(c,c^{\prime})$. Next we argue that for infinitely many of these pairs~$(ab,a^{\prime}b^{\prime})$  it turns out later that the pair~$(b,b^{\prime})$ is mutually highly compressible, which together with the independence condition implies that~$\NID(abc,a^{\prime}b^{\prime}c^{\prime}) > \alpha$. Consequently, there must be an increasing phase~$t_2> t_1$ during which the approximation goes  above~$\alpha$ for infinitely many pairs~$(ab,a^{\prime}b^{\prime})$ and a nonzero fraction of all pairs~$(c,c^{\prime})$. 

Intuitively speaking, in the induction step it is argued that there are sufficiently many argument pairs~$(abc, a^{\prime}b^{\prime}c^{\prime})$ for which the approximation~$\NID_s$ first goes above~$\alpha$ during phase~$t_0$, then goes below~$\beta$ during phase~$t_1$, and finally goes again above~$\alpha$ during phase~$t_2$. This holds because there are  sufficiently many pairs~$b$ and~$b^{\prime}$ that first appear to be random and mutually incompressible, then, second, appear to be random and mutually compressible, and, third, finally appear to be nonrandom and mutually compressible. That is, the maximum of~$\kolmpref(b)$ and of~$\kolmpref(b)^{\prime}$ and the maximum of~$\kolmpref(b | b^{\prime})$ and~$\kolmpref(b^{\prime} | b)$ appear first to be both high, second to be high and low, respectively, and, third, to be both low, where low means close to~$0$ and high means close to~$|b|$.  That such changes, which concern only the strings~$b$ and~$b^{\prime}$, result in changes of the value of~$\NID_s(abc, a^{\prime}b^{\prime}c^{\prime})$ depends on the notion of independence. For an independent pair~$(c,c^{\prime})$, the prefix-free Kolmogorov complexity of~$c$ and~$c^{\prime}$, as well as their mutual conditional prefix-free Kolmogorov complexity conditioned in addition on~$(ab)^{\ast}$ are all so close to~$|c|$ that the influence of~$c$ on a value of the form~$\NID_s(abc, a^{\prime}b^{\prime}c^{\prime})$ can be neglected compared to the influence of~$a$, $a^{\prime}$, $b$, and~$b^{\prime}$. Since in addition the two former strings are short compared to the two latter strings, the described changes in prefix-free Kolmogorov complexity relating to~$b$ and~$b^{\prime}$, though small compared to~$|c|$, are still large enough to force~$\NID_s(abc, a^{\prime}b^{\prime}c^{\prime})$ below~$\beta$ and above~$\alpha$.
\begin{theorem} \label{theorem:main}
$\NID$ is not in the oscillation hierarchy, i.e., $\NID$ is not in $\Siz_m$ for any $m\geq 1$.
\end{theorem}
\begin{proof}
For a proof by contradiction, assume that~$\NID$ is in~$\Siz_m$ for some~$m >1$, hence has a computable approximation~$\NID_s(x,y)$ that starts with an increasing phase and reaches at most phase~$m$ on all arguments. Indeed, we can assume 
\[
0= \NID_0(x,y)  < \NID_1(x,y) \le \NID(x,y)
\]
because for given~$x$ and~$y$, we can compute an upper bound for the denominator of the expression defining~$\NID(x,y)$, hence can compute a nonzero lower bound for the latter value. 
Choose the rational~$r>0$ so small that 
\[
\alpha \defeq \frac{1-5r}{1} \makebox[12em]{ is strictly larger than } \beta 
\defeq \frac{3^m-2+4r}{3^m-1} .
\]
For the scope of this proof, call a pair~$(w,w')$ of words $t$-high in case phase~$t$ is increasing and contains some~$s$ such that~$\NID_s(w,w') > \alpha$. Similarly, call the pair $t$-low in case  phase~$t$ is decreasing and contains some~$s$ such that~$\NID_s(w,w') < \beta$. Given natural numbers~$k$ and~$t$, and a real number~$\varepsilon$, let
\begin{align*}
A (k, t, \varepsilon)  = \{(a, &a') \colon
 |a|= |a'| \text{ and for a fraction of at least~$\varepsilon$ of all pairs $(u, u')$}\\
& \text{of words of equal length $(3^k-1)|a|$, the pair $(a u , a' u')$  is $t$-high} \}.
\end{align*}
Observe that all sets of the form~$A (k, t, \varepsilon)$ are empty in case~$t>m$, 
as well as in case phase~$t$ is decreasing, by choice of $\NID_s$ and by definition of~$t$-high. 

In the remainder of this proof, the notion independent conditioned on a certain word is always meant with respect to the fixed order~$h(n)=\log n$. In particular, the values~$h(n)/n$ tend to~$0$, hence for any constant~$\ell$, we have
\[
\frac{h(\ell n)}{n} = \ell \;\frac{h(\ell n)}{\ell n}  
\xrightarrow{n\rightarrow\infty} 0.
\]
\begin{claimmain}\label{claim:one}
There is some phase~$t\le m$ such that~$A (m, t, \tfrac{1}{2m})$ is infinite.
\end{claimmain}
\begin{proof}
Let~$n$ be a natural number, let~$a=0^n$ and let~$u$ and~$u'$ be any words of length~$(3^m-1)n$. Then we have
\[
\kolmpref(a u) =^+\kolmpref(u) , 
\kolmpref(a u') =^+\kolmpref( u'),
\kolmpref(a u'|a u) =^+\kolmpref(u' |u), 
\kolmpref(a u'|a u) =^+\kolmpref(u' | u), 
 \] 
where the constants hidden in the notation~$=^{+}$ do not depend on~$n$, $a$, ~$u$ or~$u'$. Thus for some constant~$d$ that is again independent of the latter four parameters, in case~$n$ is  sufficiently large and~$u$ and~$u'$ are independent, we have
\begin{align*}
\NID(au, au') &\ge 
\frac{\max\{ \kolmpref(u|u'), \kolmpref(u'|u)\} -d  }{\max\{ \kolmpref(u), \kolmpref(u')\} + d}\ge \frac{|u|-h(|u|)-d}{|u| + \kolmpref(|u|)+2d} \\ 
&
\ge \frac{|u|-2 \log |u|}{|u| + 3 \log |u|}
\ge \frac{|u|- 5 \log |u|}{|u| } > \alpha .
\end{align*}
By the preceding discussion and Lemma~\ref{lemma:number-of-independent-pairs}, for almost all~$n$ and at least half of all pairs~$(u, u')$ of words of length~$(3^m-1)n$, we have~$\NID(0^n u, 0^n u') > \alpha$, hence~$(0^n u, 0^n u')$ must be $t^{\prime}$-high for some phase~$t^{\prime}$, where~$t^{\prime} \le m$ by assumption on the approximation~$\NID_s$. Hence there must be some~$t \le m$ such that for infinitely many~$n$ for a fraction of at least~$\tfrac{1}{2m}$ of all pairs~$(u, u')$ of words of length~$(3^m-1)n$ the pair~$(0^n u, 0^n u')$ is $t$-high. For all such~$n$, the pair~$(0^n, 0^n)$ is in~$A (m, t, \tfrac{1}{2m})$.
\end{proof}
\begin{claimmain}\label{claim:two}
Let~$k$ and~$t$ be in~$\{2, \ldots, m\}$, and let~$\varepsilon >0$ be a real number such that~$A (k, t, \varepsilon)$ is infinite. Then~$A (k-1, t^{\prime}, \tfrac{\varepsilon}{4{m^2}})$ is infinite for some~$t^{\prime}\ge t+2$.
\end{claimmain}
Before we prove Claim~\ref{claim:two}, we argue that the first two claims imply the theorem.  By Claim~\ref{claim:one}, we can fix~$t \le m$ such that~$A (m, t, \tfrac{1}{2m})$ is infinite. By applying Claim~\ref{claim:two} to the latter set for at most~$\lceil \tfrac{m}{2}\rceil$ times, we obtain~$j \in \{1, \ldots, \lceil \tfrac{m}{2}\rceil\}$, $\widetilde{t}> m$, and~$\widetilde{\varepsilon}>0$ such that the set~$A (m-j, \widetilde{t}, \widetilde{\varepsilon})$ is infinite. This is a contradiction because the latter set must be empty as the approximation~$\NID_s$ is assumed to reach at most phase~$m < \widetilde{t}$. Observe that~$m- j \ge m -\lceil \tfrac{m}{2} \rceil  \ge 1$ since~$m>1$.

\bigskip
In order to demonstrate Claim~\ref{claim:two}, fix~$k$,~$t_0$ and~$\varepsilon$ as in the assumption of the claim. For the remainder of this proof, when using the letters~$a$, $b$, $c$, $w$, and $n$ with or without decoration in the same context, we always assume that we have
\[
w=abc, \quad |a|=n , \quad |b|=2n, \quad|c|= \ell n 
\quad \makebox[3em]{where } \ell = 3^k -3, \text{ i.e., } |w|=3^k n.
\]
In particular, we assume for all pairs of the form~$(a,a')$, $(ab, a'b')$, or similar that the two components of the pair have equal length. By abuse of notation, quantification over words and pairs of words involving the mentioned variable names is restricted to words of the form just described. For example, if we use the phrase for all words~$a$ and~$b$, this is meant as abbreviating the phrase for all~$n$ and all words~$a$ of length~$n$ and~$b$ of length~$2n$.
\begin{claimmain}\label{claim:three}
Let the pair~$(c,c')$ be independent conditioned on~$(ab, a'b')$ where~the pair~$(b,b')$ is random and mutually $r$-compressible. 
Then we have
\[\NID(abc, a'b'c') < \beta.\]
\end{claimmain}
\begin{proof}
The assumption of the claim implies that 
\[
\kolmpref (a b c) =^+ \kolmpref(a b) + \kolmpref(c| (ab)^{\ast}) \ge^+ 
|b| + |c|  - h(|c|) \ge^+ (\ell+2) n - h(\ell n),
\]
where the equation holds by symmetry of information, and the first inequality holds because~$b$ is random and~$(c,c')$ is independent conditioned on~$(ab, a'b')$. By symmetry, the derived lower bound also holds for~$\kolmpref (a' b' c')$. Furthermore, we have
\begin{align*}
\kolmpref (a b c | a' b' c') &\le^+  \kolmpref (a b | a' b' c') + \kolmpref (c | a' b' c') 
\le^+  \kolmpref (a b| a' b') + \kolmpref (c | n)\\  
&\le^+  |a| + r |b| + |c| =  (1 + 2r +\ell) n,
\end{align*}
where by symmetry again, this upper bound also holds for~$\kolmpref (a' b' c'| a b c)$. By the lower and upper bounds just derived, there is a constant~$d$ such that for all sufficiently large~$n$ we have 
\begin{align*}
\NID(a b c, a' b' c') &
= \frac{\max\{\kolmpref(a b c | a' b' c' ), \kolmpref(a' b' c'| abc)\}}{\max\{\kolmpref(a b c), \kolmpref(a' b' c')\}} 
\le \frac{(\ell+1+2r) n+d}{(\ell+2) n - h(\ell n) - d } \\
& = \frac{\ell+1+2r +d/n}{\ell+2  - h(\ell n)/n -d/n }
< \frac{\ell+1+3r}{\ell+2-r} 
< \frac{3^k-2+4r}{3^k- 1} \le \beta .
\end{align*}
\end{proof}
\begin{claimmain}\label{claim:four}
Let~$(c,c')$ be independent conditioned on~$(ab, a'b')$ and let the pair~$(ab, ab')$ be $r$-compressible. Then~$\NID(abc, a'b'c') > \alpha$. 
\end{claimmain}
\begin{proof}
For all sufficiently large~$n$, we have
\begin{align*}
\kolmpref (a b c) 
&\le ^+ \kolmpref(a b) + \kolmpref(c| (ab)^{\ast}) 
\le^+ r |ab| +  \kolmpref(c| n) \le^+ (3r + \ell)n\\
\kolmpref (a b c | a' b' c') &\ge^+  \kolmpref (c | (a'b')^{\ast}{c'}^{\ast} ) 
\ge^+ \ell n - h(\ell n),
\end{align*}
where by symmetry the upper bound holds also for~$\kolmpref (a' b' c')$ and the lower bound holds also for~$\kolmpref (a' b' c' | a b c)$. Similar to the proof of Claim~\ref{claim:three}, we obtain that there is a constant~$d$ such that for all sufficiently large~$n$ we have 
\begin{align*}
\NID(a b c, a' b' c') 
&\ge \frac{\ell n - h(\ell n)-d}{(\ell +3r) n+d} 
= \frac{\ell  - h(\ell n)/n -d/n }{\ell +3r+d/n} 
> \frac{\ell-r}{\ell+4r}> \frac{\ell - 5 r}{\ell} > \alpha
\end{align*}
\end{proof}
\begin{claimmain}\label{claim:five}
Infinitely many pairs~$(a b, a' b')$ where the pair~$(b, b')$ is random and mutually $r$-compressible are member of the set
\[
B_0= \{(ab, a'b') \colon  \text{$(a b c , a' b' c')$  is $t_0$-high for a fraction of at least  $\varepsilon/2$ of all $(c, c')$}\}.
\]
\end{claimmain}
\begin{proof}
For any pair~$(a, a')$ in~$A(k, t_0, \varepsilon)$, the pair~$(ab, a'b')$ is in~$B_0$ 
for a fraction of at least~$\varepsilon/2$ of all pairs~$(b, b')$. Otherwise, if this fraction were~$q < \varepsilon/2$, the fraction of pairs~$(bc,b'c')$ such that~$(abc, a'b'c')$ is $t_0$-high would be strictly less than $q + (1- q) \tfrac{\varepsilon}{2} < \varepsilon$, contrary to the definition of~$A(k,t_0,\varepsilon)$. By Lemma~\ref{lemma:number-of-random-pairs}, for almost all~$n$ all but a fraction of~$\varepsilon/4$ of all pairs~$(b,b')$ are random. So for almost all of the infinitely many~$(a,a')$ in~$A(k, t_0, \varepsilon)$, there is some~$(ab, a'b')$ in~$B_0$ where the pair~$(b,b')$ is random. 

For given~$s$, $w$ and~$w'$, one can compute the value of~$\NID_s(w,w')$ and the phase in which~$s$ is, hence the set~$B_0$ is c.e.\ But then the set of all pairs~$(b,b')$ such that~$(ab, a'b')$ is in~$B_0$ for some words~$a$ and~$a'$ is also c.e.\ By the discussion in the last paragraph, the latter c.e.\ set contains infinitely many random pairs, and then infinitely many of these random pairs must be mutually $r$-compressible by Theorem~\ref{theorem:conditional-immunity}.  
\end{proof}
\begin{claimmain}\label{claim:six}
There is a decreasing phase~$t_1> t_0$ such that the set~$B_1$ is infinite, where
\[
B_1 = \{(ab, a b') \colon  \text{$(a b c , a' b' c')$  is $t_1$-low for a fraction of at least  $\tfrac{\varepsilon}{3m}$ of all } (c, c')\}.
\]
\end{claimmain}
\begin{proof}
By Lemma~\ref{lemma:number-of-independent-pairs}, for almost all~$n$ and for any given words~$a$, $a'$, $b$, and~$b'$, all but a fraction of~$\varepsilon/6$ of the pairs~$(c, c')$ are independent conditioned on~$(ab, a'b')$, hence almost all pairs~$(a b , a'b')$ in~$B_0$ are a member of the set 
\begin{align*}
B'_0 = \{(a b , a'b') \colon  &\text{$(a b c , a' b' c')$  is $t_0$-high and~$(c, c')$ is independent conditioned }\\
&\text{on~$(ab, a'b')$ for a fraction of at least $\varepsilon/3 $ of all pairs } (c, c')\}.
\end{align*}
The definition of the set~$B'_0$ is meant such that the conditions on being $t_0$-high and being independent must be satisfied simultaneously for the specified fraction of all pairs~$(c,c')$, and this convention is extended to subsequent similar formulations. Since the set~$B_0$ is a subset of~$B'_0$ except for finitely many members of~$B_0$, Claim~\ref{claim:five} holds with~$B_0$ replaced by~$B'_0$. Thus by 
Claim~\ref{claim:three}, the set 
\begin{align*}
B''_0 = \{(a b , a'b') \colon  &\text{$(a b c , a' b' c')$  is $t_0$-high and~$\NID(a b c , a' b' c') < \beta$}\\
&\text{for a fraction of at least $\varepsilon/3 $ of all pairs } (c, c')\}
\end{align*}
is infinite, too. The phase~$t_0$ is increasing, hence for a pair~$(a b c , a' b' c')$ that is $t_0$-high but has $\NID$-value of less than~$\beta$, i.e., where we have
\[
\NID(abc,a'b'c')< \beta < \alpha <  \NID_{t_0}(abc,a'b'c'),
\]
there must be some decreasing phase~$t > t_0$ such that the pair is $t$-low. Since by assumption there are at most~$m$ phases, the claim follows.  
\end{proof}
\begin{claimmain}\label{claim:seven}
There is a decreasing phase~$t_2> t_1$ such that the set~$B_2$ is infinite, where
\begin{align*}
B_2 &= \{(ab, a b') \colon  \text{$(a b c , a' b' c')$  is $t_2$-high for at least $\tfrac{\varepsilon}{4{m^2}}$ of all (c, c')}\}.
\end{align*}
\end{claimmain}
\begin{proof}
The proof is very similar to the proof of Claim~\ref{claim:six} and we omit details that are obvious by this similarity. The set~$B_1$ is infinite and c.e.  Then~$B_1$ must contain infinitely many pairs  that are $r$-compressible because otherwise for almost all pairs~$(ab, a' b')$ in~$B_1$ we would have~$\kolmpref(aba' b') \ge \tfrac{r}{3} |aba' b'|$. The latter contradicts a straightforward variant of Theorem~\ref{Barzdins}, which follows by essentially the same proof as the theorem.

Furthermore, by essentially the same argument as in the case of~$B_0$ and~$B'_0$ it follows that almost all pairs in~$B_1$ are also in the set
\begin{align*}
B'_1 = \{(a b , a'b') \colon  &\text{$(a b c , a' b' c')$  is $t_1$-low and~$(c, c')$ is independent conditioned }\\
&\text{on~$(ab, a'b')$ for a fraction of at least $\tfrac{\varepsilon}{4{m}}$ of all pairs } (c, c')\}.
\end{align*}
By the preceding discussion, the set~$B'_1$ contains infinitely many pairs~$(a b', a b')$ where~$(a b c , a' b' c')$  is $t_1$-low and the assumption of Claim~\ref{claim:four} is satisfied for a fraction of at least~$\varepsilon/4m$ of all pairs~$(c,c')$, hence the set 
\begin{align*}
B''_1 = \{(a b , a'b') \colon  &\text{$(a b c , a' b' c')$  is $t_1$-low and~$\NID(a b c , a' b' c') > \alpha$}\\
&\text{for a fraction of at least $\tfrac{\varepsilon}{4{m}}$ of all pairs } (c, c')\}
\end{align*}
is infinite, too. The phase~$t_1$ is decreasing, hence for a pair~$(a b c , a' b' c')$ that is $t_1$-low but has $\NID$-value greater than~$\alpha$, there must be some increasing phase~$t_2  > t_1$ such that the pair is $t_2$-high. By assumption there are at most~$m$ phases, the claim follows.  
\end{proof}
Now Claim~\ref{claim:two} follows because we have~$t_2 > t_1 > t_0>0$ and  the set~$B_2$ is equal to~$A (k-1, t_2, \tfrac{\varepsilon}{4{m^2}})$ since we have~$|ab|=3n$ and~$|c|= (3^k-3) n = (3^{k-1}-1) 3n$.
\end{proof}
From the proof of Theorem~\ref{theorem:main} it is obvious that the examples of pairs 
of strings $x$,$y$ forcing the changes in the approximation $\NID_s$ are of rather 
long length. It would be interesting to have a more careful analysis of these 
lengths.

\begin{question}
Relate the number of oscillations of approximations of $\NID(x,y)$ to 
the length of $x$ and $y$. 
\end{question}

\end{document}